\crefname{equation}{Equation}{Equations}
\crefname{section}{Section}{Sections}
\crefname{chapter}{Chapter}{Chapters}
\newtheorem{lem}{Lemma}[section]
\newtheorem{thm}[lem]{Theorem}
\newtheorem*{thm*}{Theorem}
\newtheorem{prop}[lem]{Proposition}
\theoremstyle{definition}
\newtheorem{dfn}[lem]{Definition}
\newtheorem{questions}[lem]{Questions}
\newtheorem{rmk}[lem]{Remark}
\newcommand{\ZZ}{\mathbb{Z}}
\newcommand{\QQ}{\mathbb{Q}}
\newcommand{\PP}{\mathbb{P}}
\newcommand{\oper}[1]{\operatorname{#1}} 
\newcommand{\p}{\mathbb{P}^1}
\newcommand{\z}{\mathbb{Z}}
\newcommand{\q}{\mathbb{Q}}
\newcommand{\cha}{\operatorname{char}}
\newcommand{\e}{\'{e}}
\newcommand{\tw}[1]{{\widetilde{#1}}}
\newcommand{\benum}{\begin{enumerate}[(a)]}
\newcommand{\eenum}{\end{enumerate}}
\numberwithin{equation}{section}
\begin{document}

\title{Arithmetic descent of specializations of Galois covers}


\author{Ryan Eberhart}
\address{Department of Mathematics, The Pennsylvania State University, University Park, PA 16802, USA}
\email{rde3@psu.edu}
\thanks{The authors would like to thank Akshay Venkatesh for helpful discussions.}

\author{Hilaf Hasson}
\address{Department of Mathematics, Stanford University, Palo Alto, CA 94305, USA}
\email{hilaf@stanford.edu}

\subjclass[2010]{Primary 12F12; Secondary 14H30, 11R32}

\date{}

\dedicatory{}

\commby{}

\begin{abstract}
Given a $G$-Galois branched cover of the projective line over a number field $K$, we study whether there exists a closed point of $\mathbb{P}^1_K$ with a connected fiber such that the $G$-Galois field extension induced by specialization ``arithmetically descends'' to $\mathbb{Q}$ (i.e., there exists a $G$-Galois field extension of $\mathbb{Q}$ whose compositum with the residue field of the point is equal to the specialization). We prove that the answer is frequently positive (whenever $G$ is regularly realizable over $\mathbb{Q}$) if one first allows a base change to a finite extension of $K$. If one does not allow base change, we prove that the answer is positive when $G$ is cyclic. Furthermore, we provide an explicit example of a Galois branched cover of $\mathbb{P}^1_K$ with no \it $K$-rational \rm points of arithmetic descent.
\end{abstract}

\maketitle

\section{Introduction}\label{introsect}
In this paper we introduce and study the notion of \it arithmetic descent \rm (see \cref{E:arth}) of specializations of $G$-Galois branched covers. Arithmetic descent is closely related to the Inverse Galois Problem (IGP), which asks whether for every finite group $G$ there exists a $G$-Galois field extension of $\mathbb{Q}$.

An easy argument (see for example \cite[Corollary 3.3.5]{msri}) using Riemann's Existence Theorem implies that for every finite group $G$ there exists a $G$-Galois branched cover (see \cref{galoisdfn}) of $\p_{\bar \q}$. By descent, this implies that there is a $G$-Galois branched cover over \it some \rm number field $K$. By Hilbert's Irreducibility Theorem (\cite[Theorem 13.4.2]{fieldarith}) there exist infinitely many $G$-Galois field extensions of $K$ arising from specializations of this cover at $K$-rational points. Most efforts to solve the IGP over $\mathbb{Q}$ have been aimed at producing a $G$-Galois branched cover of $\p_{\bar{\mathbb{Q}}}$ that descends to $\mathbb{Q}$ and then specializing. (Especially important is the technique of \it rigidity\rm , developed by Belyi, Feit, Fried, Shih, Thompson, Matzat and others; see \cite{vo}, \cite{serretopics}, and \cite{matz} for details. There has also been research aimed at understanding how the \it field of moduli \rm depends on the group, as well as on 
topological data; see for example \cite{beck}, \cite{flon}, \cite{wew}, \cite{obus1}, \cite{obus2}, and \cite{hass}.) In this paper we study an alternative method of attack.

By the above, for every finite group $G$ there exists a $G$-Galois branched cover of $\p_K$ for some number field $K$. We now ask whether a point on $\p_K$ can be chosen cleverly so that the $G$-Galois field extension coming from specialization descends to $\mathbb{Q}$, even if the cover itself does not. For example, consider the cyclic $C_4$-Galois cover $\p_{\mathbb{Q}(i)}\rightarrow \p_{\mathbb{Q}(i)}$ given by $y^4=x$. This cover does \it not \rm descend (together with its Galois action) to $\mathbb{Q}$. However, by specializing at $x=i$ we get the $C_4$-Galois extension $\mathbb{Q}(\zeta_{16})/\mathbb{Q}(i)$ (where $\zeta_{16}$ is a primitive $16^{\oper{th}}$ root of unity), which \it does \rm arithmetically descend to the $C_4$-Galois extension $\mathbb{Q}(\zeta_{16}+\zeta_{16}^{-1})/\mathbb{Q}$ (i.e., the compositum $\mathbb{Q}(\zeta_{16}+\zeta_{16}^{-1})\mathbb{Q}(i)$ is equal to $\mathbb{Q}(\zeta_{16})$).

There are several natural questions concerning arithmetic descent, which we now discuss. Let $G$ be a finite group, let $L/K$ be a finite extension of Hilbertian fields (examples of Hilbertian fields include number fields and function fields; see \cref{E:hilbertian}), and let $X\rightarrow \p_L$be a $G$-Galois branched cover. In this situation, we ask the following questions:
\begin{questions}\label{E:quest}\
\begin{enumerate}
\item Does there exist an \it $L$-rational point \rm $P$ of $\p_L$ such that the specialization at $P$ arithmetically descends to $K$?\label{q1}
\item Does there exist a \it closed point \rm $P$ of $\p_L$, not necessarily $L$-rational, such that the specialization at $P$ arithmetically descends to $K$?\label{q2}
\item Does Question (1) hold after finite base change? In other words, does there exist a finite field extension $E/L$ and an $E$-rational point $P$ of $\p_{E}$ such that the specialization at $P$ in $X_{E}\rightarrow \p_{E}$ arithmetically descends to $K$?\label{q3}
\end{enumerate}
\end{questions}
Note that a positive answer to Question \ref{E:quest}.\ref{q1} implies a positive answer to Question \ref{E:quest}.\ref{q2}, which implies a positive answer to Question \ref{E:quest}.\ref{q3}.

In the remainder of this paper, we attack all three questions. In Section \ref{E:defnques} we introduce the definitions we require. In Section \ref{E:sec1} (\cref{E:th1}) we show that the answer to Question \ref{E:quest}.\ref{q3} is frequently positive--whenever $G$ is regularly realizable over $K$. While this motivates further investigation, the proof of \cref{E:th1}
lacks geometric flavor, and does not lend itself to generalization.
In Section \ref{E:sec2} (\cref{nonthincor}) we answer Question \ref{E:quest}.\ref{q2} positively for cyclic groups of order coprime to $\cha(K)$.
In Section \ref{E:sec3} (\cref{nopointsthm}) we exhibit an explicit cover for which the answer to Question \ref{E:quest}.\ref{q1} is negative. Thus, our results seem to suggest that while Question \ref{E:quest}.\ref{q3} is the most likely to have a positive answer, Question \ref{E:quest}.\ref{q2} is the most geometrically interesting of the above questions.

\section{Definitions}\label{E:defnques}
\begin{dfn}\rm\label{galoisdfn}
Let $G$ be a finite group, let $K$ be a field, and let $X$ and $Y$ be geometrically irreducible varieties over $K$. Then a map $X\rightarrow Y$ of $K$-varieties is called a \textit{branched cover} (sometimes shortened to \textit{cover}) if it is finite and generically \e tale. A branched cover is \textit{Galois} if $\oper{Aut}(X/Y)$ acts freely and transitively on its geometric fibers, away from the ramification. A Galois branched cover is called \textit{G-Galois} if $\oper{Aut}(X/Y)$ is isomorphic to $G$. If there exists a $G$-Galois branched cover of $\PP^1_K$, we say that $G$ is \textit{regularly realizable} over $K$.
\end{dfn}

\begin{dfn}\label{E:arth}
Let $G$ be a finite group, $L/K$ a field extension, and $E/L$ a $G$-Galois extension of fields. We say that $E/L$ \textit{arithmetically descends to $K$} if there exists a $G$-Galois field extension $F/K$ such that $F\otimes_K L\cong E$ as $L$-algebras.

If $X\rightarrow Y$ is a branched cover of curves over $L$ then we say that a closed point $P$ of $Y$ \textit{arithmetically descends to $K$} if the specialization at $P$ is connected, and the residue field extension arithmetically descends to $K$.
\end{dfn}

\begin{dfn}
Let $V$ be an irreducible variety over a field $K$. Following \cite{serretopics}, a subset $A$ of $V(K)$ is of \textit{type $C_1$} if there is a closed proper subset $W\subseteq V$ such that $A\subseteq W(K)$. A subset $A$ of $V(K)$ is of \textit{type $C_2$} if there is a geometrically irreducible variety $V'$ of the same dimension as $V$, and a generically surjective morphism $\pi:V'\rightarrow V$ of degree greater than one, such that $A\subseteq\pi(V'(K))$.

A subset $A$ of $V(K)$ is called \textit{thin} if it is contained in a finite union of sets of type $C_1$ and $C_2$. The variety $V$ is of \textit{Hilbert type} if $V(K)$ is non-thin.
\end{dfn}
\begin{rmk}
The definition of sets of type $C_2$ in \cite{serretopics} requires that $V'$ is only irreducible, rather than geometrically irreducible. However, by the remark after Definition 3.1.3 in \cite{serretopics}, the two definitions are equivalent.
\end{rmk}

While we are primarily interested in number fields, most of our results apply to a more general type of field called Hilbertian fields. (See \cite[Chapters 12 and 13]{fieldarith} for a detailed discussion.)
\begin{dfn}\label{E:hilbertian}
A field $K$ is called \textit{Hilbertian} if $\PP^1_K$ is of Hilbert type.
\end{dfn}

\begin{dfn}
Let $L/K$ be a field extension and $X$ a $K$-scheme. The notation $X_L$ will be used for the base change $X\times_K L$ of $X$ to $L$. Let $\pi:X_{L}\rightarrow X$ be the morphism induced by this base change. For the sake of brevity, $X_{L}(K)$ will refer to the set $\pi^{-1}(X(K))$ of $L$-points of $X_L$ coming from
$K$-points of $X$.
\end{dfn}

The notation $C_n$ will be used for a cyclic group of order $n$, and the notation $\zeta_n$ will refer to a coherent choice of a primitive $n^{\oper{th}}$ root of unity in a fixed separable closure of the field in question.

\section{Arithmetic descent after finite base change}\label{E:sec1}
The following shows that the answer to Question \ref{E:quest}.\ref{q3} is often affirmative:
\begin{thm}\label{E:th1}
In the situation of Questions \ref{E:quest}, assume that $G$ is regularly realizable over $K$. Then the answer to Question \ref{E:quest}.\ref{q3} is positive for any $L/K$ and any $G$-Galois branched cover $X\rightarrow \p_L$. In particular, the Regular Inverse Galois Problem (RIGP) over $K$ implies an affirmative answer to Question \ref{E:quest}.\ref{q3} for any $L/K$ and any $G$-Galois branched cover of $\PP^1_L$.
\end{thm}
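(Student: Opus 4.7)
The plan is to construct, for a carefully chosen $G$-Galois extension $F/K$ and finite extension $E/L$, a point $P_0 \in \p_E(E)$ whose fiber in $X_E \to \p_E$ is $E$-algebra isomorphic to $F \otimes_K E$. Since $G$ is regularly realizable over $K$, fix a $G$-Galois cover $Y \to \p_K$, and let $L'/L$ denote the maximal ``constant'' sub-extension of $L(X)/L(t)$---that is, the largest subfield of the form $L'(t)$ for a finite extension $L'/L$. By Hilbert irreducibility over the Hilbertian field $K$, applied to $Y$ together with finitely many auxiliary thin conditions (one for each intermediate field of $L'/L$), choose $P_1 \in \p_K(K)$ so that $F := Y_{P_1}$ is a $G$-Galois field extension of $K$ with $F \cap L = K$ and $FL \cap L' = L$. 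The first condition makes $FL := F \otimes_K L$ a $G$-Galois field extension of $L$, and the second forces the compositum of function fields $L(X) \cdot FL(t)$ to be linearly disjoint, hence $G \times G$-Galois over $L(t)$.

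Introduce the Isom $G$-torsor
$$Z \;:=\; \operatorname{Isom}^G_{\p_L}\bigl(X,\; \operatorname{Spec}(FL)\times_L \p_L\bigr) \longrightarrow \p_L,$$
whose fiber over $P \in \p_L$ parametrizes $G$-equivariant isomorphisms $X_P \to FL$; this is itself a $G$-Galois cover of $\p_L$, and a short calculation shows $L(X) \cdot L(Z) = L(X) \cdot FL(t)$, which is $G \times G$-Galois over $L(t)$ by the previous paragraph. Applying Hilbert irreducibility over $L$ simultaneously to $X$, $Z$, and their $G \times G$-Galois compositum cover, one obtains an $L$-rational point $P_0 \in \p_L(L)$ at which $M := X_{P_0}$ and $E := Z_{P_0}$ are both $G$-Galois field extensions of $L$ and are linearly disjoint over $L$.

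Viewing $P_0$ as an $E$-rational point of $\p_E$, the torsor $Z_{P_0} \otimes_L E$ is split, since $E = Z_{P_0}$ is by construction the field of a section of $Z_{P_0}$; by the defining property of the Isom torsor this yields an isomorphism $M \otimes_L E \cong FL \otimes_L E = F \otimes_K E$ of $G$-Galois $E$-algebras. The linear disjointness of $M$ and $E$ over $L$ makes $M \otimes_L E$ a field, so the fiber of $X_E \to \p_E$ at $P_0$ is a $G$-Galois field extension of $E$, isomorphic as an $E$-algebra to $F \otimes_K E$, and hence arithmetically descends to $K$ through $F/K$. The most delicate step is the genericity argument in the first paragraph: one must verify that the Hilbert-type specializations of $Y$ at $K$-rational points produce $F$ satisfying both linear-disjointness conditions, which is a finite avoidance problem and constitutes the technical core of an otherwise formal argument.
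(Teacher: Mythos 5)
Your proposal is correct and, despite the torsor packaging, is essentially the paper's own argument: the field $E=Z_{P_0}$ cut out by your Isom cover is precisely the fixed field of the diagonal subgroup $\Delta=\{(g,g)\}\subseteq G\times G$ inside $F\otimes_K X_{P_0}$, which is exactly the base-change field (denoted $L'$ in the paper) obtained there by first specializing both covers and then taking diagonal invariants, and both proofs secure the needed linear disjointness by the same intersection of a non-thin set with the complement of a thin set. The only (harmless) slip is calling $Z\to\p_L$ a $G$-Galois cover: for nonabelian $G$ it is a torsor under an inner form of $G$ and need not be Galois, but your argument uses only that it is a finite cover whose fibers are irreducible outside a thin set.
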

\begin{proof}
Let $P$ be an $L$-rational point of $\p_L$ whose fiber in $X$ is connected, and let $F/L$ be the resulting $G$-Galois field extension. By assumption, there exists a $G$-Galois branched cover $Y\rightarrow \p_K$. Consider the base change of this cover to $F$. The set $\p_F(K)$ is non-thin by \cite[Proposition 3.2.1]{serretopics}, while the subset of $\p_F(F)$ of points whose fiber in $Y_F\rightarrow \p_F$ is disconnected is thin by \cite[Proposition 3.3.1]{serretopics}. Since the intersection of a non-thin set with the complement of a thin set is non-empty, there is a point $Q\in \p_F(K)$ whose fiber in $Y_F$ is connected. By specializing at the corresponding $K$-point of $\p_K$ in $Y\rightarrow \p_K$, the resulting $G$-Galois field extension $E/K$ is linearly disjoint with $F$ over $K$.

Note that $(E\otimes_K F)/L$ is a Galois extension of fields with group $G\times G$. Let $\Delta$ denote the subgroup $\{(g,g)|g\in G\}$ of $G\times G$ and let $L'$ be the subfield of $E\otimes_K F$ fixed by $\Delta$. Let $P_{L'}$ be the unique $L'$-point of $\p_{L'}$ lying above $P$. Then $P_{L'}$ is a point of arithmetic descent for $X_{L'}\rightarrow \p_{L'}$. Indeed, the specialization is the field extension $F\otimes_L L'$ of $L'$, which is isomorphic as an $L'$-algebra to $E\otimes_K F$, which, in turn, is isomorphic to $E\otimes_K L'$ since $\Delta\cap (G\times 1)=\Delta\cap (1\times G)=1$. 
\end{proof}

\section{Closed points of arithmetic descent for cyclic groups}\label{E:sec2}
In this section, our objective is to establish the following answer to Question \ref{E:quest}.\ref{q2}.

\begin{thm}\label{nonthincor}
Question \ref{E:quest}.\ref{q2} has a positive answer for any $G$-Galois branched cover $X\rightarrow\PP^1_L$, where $G$ is a cyclic group of order coprime to $\cha(K)$. In fact, one can replace $\p_L$ with any smooth curve over $L$.
\end{thm}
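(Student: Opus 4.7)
The plan is to exploit Kummer theory. First, we reduce to the case $\mu_n \subseteq K$ by base-changing to $K(\mu_n)$ and $L(\mu_n)$ (with a delicate descent at the end, using that the chosen Kummer element can be made $\Gal(K(\mu_n)/K)$-invariant). Under this reduction, the $C_n$-Galois cover $X \to \PP^1_L$ is of the form $y^n = f(t)$ for some $f \in L(t)$, which after clearing denominators we may take to lie in $L[t]$.

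Since $K$ is Hilbertian, it admits infinitely many $C_n$-Galois extensions of the form $K(\sqrt[n]{a})$ with $a \in K^\times$; since $L/K$ is finite with only finitely many intermediate subfields, we can choose $a$ so that $M := K(\sqrt[n]{a})$ is linearly disjoint from $L$ over $K$. We then introduce a parameter polynomial $h \in L[t]$ (for instance $h(t) = t+u$ for $u \in L$) and consider $g(t) := f(t) - a\,h(t)^n$. Let $t_0 \in \overline{L}$ be a root of $g$, giving a closed point $P \in \PP^1_L$ with residue field $L(t_0)$. By construction $f(t_0) = a\,h(t_0)^n$, so the specialization is
\[
L(t_0)(\sqrt[n]{f(t_0)}) \;=\; L(t_0)(h(t_0)\sqrt[n]{a}) \;=\; L(t_0)(\sqrt[n]{a}) \;=\; L(t_0)\cdot M,
\]
which arithmetically descends to $M/K$ as soon as the fiber is $C_n$-Galois.

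The main obstacle is guaranteeing that the fiber is connected (i.e., a field of degree $n$): equivalently $\sqrt[n]{a} \notin L(t_0)$, equivalently $L(t_0) \cap M = K$. To achieve this we let $u$ vary and apply Hilbert's irreducibility theorem over $L$ (which is Hilbertian as a finite extension of the Hilbertian field $K$) to the polynomial $f(t) - a(t+u)^n \in L[u,t]$, together with an auxiliary application involving $x^n - a$. For a non-thin set of $u \in L$ the specialization $g_u(t) := f(t) - a(t+u)^n$ is irreducible over $L$ with splitting field linearly disjoint from $M$, furnishing the desired closed point $P$.

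For the generalization from $\PP^1_L$ to an arbitrary smooth curve $Y/L$, we use the analogous Kummer description $L(X) = L(Y)(\sqrt[n]{f})$ with $f \in L(Y)^\times$, replacing the parameter $h(t) = t+u$ with an analogous element of $L(Y)$ (e.g., the pullback along a nonconstant morphism $Y \to \PP^1_L$). The same Hilbert-irreducibility argument then produces the required closed point of $Y$.
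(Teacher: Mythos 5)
Your reduction to the case $\mu_n\subseteq K$ is where the proof breaks down, and unfortunately that reduction is the entire difficulty of the theorem. The mechanism you propose for the ``delicate descent at the end'' --- choosing the Kummer element $a$ to be $\Gal(K(\mu_n)/K)$-invariant, i.e.\ $a\in K$ --- does not suffice: a $C_n$-Galois extension $K(\mu_n)(\sqrt[n]{a})/K(\mu_n)$ with $a\in K^\times$ need not arithmetically descend to $K$. Already for $n=3$ and $K=\QQ$, the extension $\QQ(\zeta_3,\sqrt[3]{2})/\QQ(\zeta_3)$ has Kummer element $2\in\QQ$, yet it does not descend: a $C_3$-extension $F/\QQ$ with $F\otimes_\QQ\QQ(\zeta_3)\cong\QQ(\zeta_3,\sqrt[3]{2})$ would force $\QQ(\zeta_3,\sqrt[3]{2})/\QQ$ to be abelian, whereas it is $S_3$-Galois. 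The correct criterion (Albert--Miki--Saltman; see \cref{E:saltforthree} for $n=3$) is that $a$ lie in the image of a twisted-norm map, e.g.\ $a\equiv\tau(b)b^2$ modulo cubes when $n=3$, and \cref{nopointsthm} exhibits a cover for which this condition fails at every rational point --- so the obstruction you are waving away is a genuine one. The paper circumvents it by never constructing the descended extension by hand: it invokes regular realizability of $C_n$ over $K$ together with Hilbert irreducibility to produce infinitely many $C_n$-extensions of $K(\zeta_n)$ that provably descend to $K$, and then shows (\cref{nonthinthm}) that the corresponding Kummer parameters form a non-thin set of points of arithmetic descent for $y^n=x$, which is then pulled back to the given cover.

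A second, related omission: even granting a good closed point $P'$ of $\PP^1_{L(\mu_n)}$, you must descend the point itself to a closed point $P$ of $\PP^1_L$ and verify that the specialization of the original cover at $P$ is still connected and still descends to $K$ (not merely to $K(\mu_n)$); this can fail when $\zeta_n\notin\kappa(P)$, and ruling that out occupies the second half of the paper's proof (the set $B$, the equivalence classes, and \cref{TNTlem}). Your proposal does not address it. Your Hilbert-irreducibility device $f(t)-a(t+u)^n$ for locating a closed point with connected fiber is a pleasant, more elementary alternative to the paper's fiber-product and thin-set argument and would work when $\mu_n\subseteq K$, but as written the proof establishes the theorem only in that special case.
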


Roughly, our method of proof will be to show (\cref{nonthinthm}) that the set of points of arithmetic descent for a particularly simple $C_n$-Galois cover is non-thin. After employing a suitable Cartesian diagram and dealing with issues arising from the potential lack of a primitive $n^{\oper{th}}$ root of unity in $L$, the result will follow for a general $C_n$-Galois cover.

\begin{thm}\label{nonthinthm}
Let $K$ be a Hilbertian field, $n$ a positive integer coprime to $\cha(K)$, and $L=K(\zeta_n)$. Let $\p_L\rightarrow \p_L$ be the $C_n$-Galois cover defined by $y^n=x$. Then the set of $L$-rational points which arithmetically descend to $K$ for this cover is non-thin.
\end{thm}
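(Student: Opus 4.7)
The plan is to construct an explicit family of $L$-rational points of arithmetic descent for the cover $y^n = x$ and then deduce non-thinness from Hilbertianity of $K$. Let $\Gamma = \operatorname{Gal}(L/K)$ and let $\chi : \Gamma \to (\ZZ/n\ZZ)^\times$ be the cyclotomic character $\sigma(\zeta_n)=\zeta_n^{\chi(\sigma)}$. Fix integer lifts $e_\sigma \in \ZZ$ of $\chi(\sigma^{-1})$, and for $t \in L^\times$ set
\[
a(t) \;=\; \prod_{\sigma \in \Gamma}\sigma(t)^{e_\sigma} \;\in\; L^\times.
\]
Defining $n m_\tau(\sigma) := e_{\sigma^{-1}\tau} - e_\tau\chi(\sigma)$ (a multiple of $n$ by the choice of $e_\sigma$) and $b(\sigma) := \prod_\tau \tau(t)^{m_\tau(\sigma)}$, a direct computation yields $\sigma(a(t)) = a(t)^{\chi(\sigma)}\,b(\sigma)^n$, so the ``trivial action'' equivariance $\sigma(a) \equiv a^{\chi(\sigma)} \pmod{(L^\times)^n}$ needed for descent is built in.

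For each $t$ with $a(t) \notin (L^\times)^n$, setting $\gamma = \sqrt[n]{a(t)}$, I would show that $L(\gamma)/L$ arithmetically descends by writing down the splitting explicitly: the formula $\tilde\sigma(\gamma) := \gamma^{\chi(\sigma)} b(\sigma)$ extends $\sigma$ to an automorphism of $L(\gamma)/K$. A direct computation, reducing to the identity
\[
m_\tau(\sigma_1\sigma_2) \;=\; e_\tau K(\sigma_1,\sigma_2) + m_\tau(\sigma_1)\chi(\sigma_2) + m_{\sigma_1^{-1}\tau}(\sigma_2),
\]
with $nK(\sigma_1,\sigma_2) := \chi(\sigma_1)\chi(\sigma_2) - \chi(\sigma_1\sigma_2)$, shows that $\sigma \mapsto \tilde\sigma$ is a group homomorphism $\Gamma \hookrightarrow \operatorname{Gal}(L(\gamma)/K)$ splitting the extension $1 \to C_n \to \operatorname{Gal}(L(\gamma)/K) \to \Gamma \to 1$. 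Moreover $\tilde\sigma$ commutes with the generator $\tau : \gamma \mapsto \zeta_n\gamma$ of $\operatorname{Gal}(L(\gamma)/L)$, because the exponent of $\gamma$ in $\tilde\sigma(\gamma)$ is precisely $\chi(\sigma)$. Hence $\operatorname{Gal}(L(\gamma)/K) \cong C_n \times \Gamma$ as a direct product, and the fixed field of the $\Gamma$-factor is a $C_n$-Galois extension $F/K$ with $F \otimes_K L \cong L(\gamma)$, witnessing the descent.

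It then remains to show that the set $\{a(t) : t \in L^\times,\, a(t) \notin (L^\times)^n\}$ is non-thin in $\mathbb{P}^1_L(L)$. View $t \mapsto a(t)$ as the $K$-point restriction of a $K$-morphism $a : V \to V$ for $V := \operatorname{Res}_{L/K}\mathbb{G}_m$; after base change to $L$ it becomes the isogeny on $\mathbb{G}_m^\Gamma$ given by the integer matrix $(e_{\sigma^{-1}\tau})$, whose nonzero determinant makes $a$ dominant. Composing with the open inclusion $V \hookrightarrow \operatorname{Res}_{L/K}\mathbb{P}^1_L$ yields a dominant $K$-morphism $f$. If the image of $V(K)$ under $f$ were contained in a thin set $T \subset \mathbb{P}^1_L(L)$, then applying $\operatorname{Res}_{L/K}$ to the degree-$>1$ covers witnessing thinness of $T$ (using that $\operatorname{Res}_{L/K}$ preserves geometric irreducibility of $L$-curves) would exhibit $T$ as thin in $\operatorname{Res}_{L/K}\mathbb{P}^1_L$ over $K$; pulling back along $f$ would then force $V(K)$ to be thin in $V$, contradicting Hilbertianity of $K$ (since $V$ is a Zariski-open subset of $\operatorname{Res}_{L/K}\mathbb{A}^1_L \cong \mathbb{A}^{[L:K]}_K$). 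The main obstacle is this pullback step: pullbacks of type-$C_2$ sets along dominant morphisms need not be thin (as the example $x \mapsto x^2$ on $\mathbb{G}_m$ shows, whose preimage of the set of squares is everything), so one must verify that the fiber products $V \times_{\operatorname{Res}_{L/K}\mathbb{P}^1_L} \operatorname{Res}_{L/K}(Y_i)$ remain geometrically irreducible for the covers $Y_i$ appearing in the thinness data of $T$, using the specific isogeny structure of $f$; alternatively, one could restrict $t$ to a carefully chosen one-parameter $K$-subfamily of $V$ and apply Hilbertianity of $K$ directly to the resulting one-variable problem.
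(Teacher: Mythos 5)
Your explicit construction of descending extensions is essentially correct: the elements $a(t)=\prod_{\sigma}\sigma(t)^{e_\sigma}$ with $e_\sigma\equiv\chi(\sigma^{-1})\pmod n$ do satisfy $\sigma(a)\equiv a^{\chi(\sigma)}\pmod{(L^\times)^n}$, your cocycle identity checks out, and it yields a splitting $\Gamma\to\Gal(L(\gamma)/K)$ whose image commutes with $\Gal(L(\gamma)/L)$, so $\Gal(L(\gamma)/K)\cong C_n\times\Gamma$ and the point descends. (This is the classical Albert--Saltman construction; two small repairs are needed: you must discard not only $a(t)\in(L^\times)^n$ but every $a(t)$ whose fiber in $y^n=x$ is disconnected, i.e.\ every $a(t)$ lying in $(L^\times)^\ell$ for a prime $\ell\mid n$ --- harmless, as that locus is thin --- and the integer lifts of $\chi(\sigma)$ must be chosen consistently so that $(\gamma^{c_\sigma}b(\sigma))^n$ equals $\sigma(a)$ exactly, not just up to an $n$-th power.)

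The genuine gap is precisely where you flag it: the non-thinness of $\{a(t):t\in L^\times\}$ in $\PP^1_L(L)$, which is the actual content of the theorem. Neither of your proposed remedies is carried out, and neither is routine. The Weil-restriction/pullback route fails for the reason you yourself give, and verifying irreducibility of the fiber products $V\times_{\operatorname{Res}_{L/K}\PP^1_L}\operatorname{Res}_{L/K}(Y_i)$ for \emph{unknown} covers $Y_i$ witnessing thinness is not a checkable condition --- it is the whole problem restated. What one actually has to do (and this is, in substance, the authors' original and since-abandoned argument) is: since $V_L(K)$ is non-thin in $V_L$ over $L$ by \cite[Proposition 3.2.1]{serretopics}, thinness of $f(V_L(K))$ would force $f$ to factor birationally through a curve $C\to\PP^1_L$ of degree greater than one; by L\"uroth $C\cong\PP^1_L$, so $f=g\circ h$ with $g$ a univariate rational function of degree greater than one; one then derives a contradiction from the fact that $f$ is a product of Galois-conjugate linear forms in the Weil-restriction coordinates occurring with pairwise distinct exponents $e_\sigma$, which is incompatible with being $g$ of anything for $\deg g>1$. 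That multiplicity analysis is missing from your write-up and cannot be waved through. By contrast, the published proof avoids your construction entirely: it first produces infinitely many pairwise non-isomorphic descending extensions $L(\sqrt[n]{a_i})/L$ abstractly, by specializing a regular $C_n$-cover of $\PP^1_K$ over larger and larger finite extensions of $L$ (Hilbertianity plus \cite[Lemma 16.3.4]{fieldarith}), and then shows $\bigcup_i a_i(L^\times)^n$ is non-thin using that these Kummer classes are pairwise almost disjoint and that each is the image of \emph{all} of $\PP^1_L(L)$ under $x\mapsto a_ix^n$; the factorization argument then lives entirely over $L$ on $\PP^1$, sidestepping both the Weil restriction and the multivariate factorization difficulties your route runs into.
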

\begin{proof}
We first remark that there are infinitely many non-isomorphic $C_n$-Galois field extensions of $L$ which arithmetically descend to $K$. Indeed, assume by contradiction that all such extensions of $L$ are contained in a finite extension $E/L$. By \cite[Lemma 16.3.4]{fieldarith} there exists a $C_n$-Galois branched cover $Z\rightarrow \p_K$. The subset $\p_E(K)$ of $\p_E(E)$ is non-thin (\cite[Proposition 3.2.1]{serretopics}), and the set of $E$-rational points whose fiber in $Z_E\rightarrow \p_E$ is connected is the complement of a thin set (\cite[Proposition 3.3.1]{serretopics}). Since the intersection of a non-thin set and the complement of a thin set is non-empty, it follows that there is a $C_n$-Galois extension of $E$ which arithmetically descends to $K$. In particular there is a $C_n$-Galois extension of $L$ that arithmetically descends to $K$ which is not contained in $E$, a contradiction.

By the above, let $\{L_i\}_{i\in I}$ be an infinite set of non-isomorphic $C_n$-Galois field extensions of $L$, each of which arithmetically descends to $K$. For each $i$ in $I$ there exists an $a_i$ in $L$ such that $L_i=L(\sqrt[n]{a_i})$.
Let $f_i:\PP^1_L\rightarrow\PP^1_L$ be given by $x\mapsto a_i x^n$. Note that by Kummer theory $f_i(\PP^1_L(L)\smallsetminus\{0,\infty\})$ is the set of $L$-rational points whose specialization in the cover $\p_L\rightarrow \p_L$ given by $y^n=x$ is isomorphic to $L(\sqrt[n]{a_i})/L$. Therefore it suffices to show that $S=\bigcup f_i(\PP^1_L(L))$ is non-thin.

Suppose that $S$ is thin. By \cite[p.\ 245 Claim B(f)]{fieldarith} any set of type $C_1$ is contained in a set of type $C_2$, so we have that $$S \subseteq \bigcup_{j=1}^n g_i(Y_j(L))$$
where each $Y_j$ is a geometrically irreducible curve over $L$ and each $g_j:Y_j\rightarrow\PP^1_L$ has degree greater than one. Let $Y$ denote the disjoint union of the curves $Y_j$ and $g:Y\rightarrow \PP^1_L$ the map induced from the $g_j$'s.

Fix an index $i$. After taking the reduced structure if necessary, let $W$ denote the fiber product in the following diagram
$$\xymatrix{
W:=Y\times_{\PP^1_L}\PP^1_L\ar[d]\ar[r]^{\,\,\,\,\,\,\,\,\,\,\,\,\,\,\,\,\,\,\,\,\pi} & \PP^1_L\ar[d]^{f_i} \\
Y\ar[r]^g & \PP^1_L
}$$
Since $f_i(\PP^1_L(L))\subseteq g(Y(L))$, we have that $\PP^1_L(L)\subseteq \pi(W(L))$ and thus $\pi(W(L))$ is non-thin. Finiteness is base change invariant, so $\pi$ is finite, which implies that $W$ consists of finitely many irreducible components of dimension no greater than one. These facts collectively imply that there exists an irreducible component $W'$ of $W$ such that the restriction of $\pi$ to $W'$ has a section on an open subvariety $U$ of $\PP^1_L$. Thus for every index $i$ we have that $x\mapsto a_i x^n$ factors birationally through one of finitely many maps $g_j:Y_j\rightarrow\PP^1_L$. However, this is impossible since $f_i(\PP^1_L(L))\bigcap f_{i'}(\PP^1_L(L))=\{0,\infty\}$ for $i\neq i'$.
\end{proof}

Before proving \cref{nonthincor}, we require one last lemma.

\begin{lem}\label{TNTlem}
Let $L'/L$ be a finite extension of Hilbertian fields, and let $S\subseteq \PP^1_{L'}(L')$ be a non-thin set. Suppose that $S=\bigcup_{i\in I} S_i$ where each $S_i$ is equal to $f_i(\PP^1_{L'}(L))$ for a cover $f_i:\PP^1_{L'}\rightarrow\PP^1_{L'}$. If $R$ is a thin set, then there exists an index $i\in I$ such that $S_i\smallsetminus R$ is infinite.
\end{lem}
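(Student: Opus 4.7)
The plan is to prove the slightly stronger statement that $S_i \smallsetminus R$ is infinite for \emph{every} $i \in I$, which avoids any contradiction argument or cardinality analysis of $I$. Two standard facts about thin sets serve as the key inputs.

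First, $\PP^1_{L'}(L)$ is a non-thin subset of $\PP^1_{L'}(L')$ by \cite[Proposition 3.2.1]{serretopics} applied to the finite extension $L'/L$ of Hilbertian fields; this is precisely the statement invoked at the start of the proof of \cref{nonthinthm}. Second, the preimage of a thin subset of $\PP^1_{L'}(L')$ under a cover $f \colon \PP^1_{L'} \to \PP^1_{L'}$ is again thin. This reduces to the $C_1$ case (preimage of a closed proper subscheme, i.e., a finite set of points, under a finite map is finite) and the $C_2$ case (handled by forming the fiber product along the two covers and analyzing its irreducible components, mirroring the construction already used in the proof of \cref{nonthinthm}).

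Granting these, fix any $i \in I$. Since $\PP^1_{L'}(L)$ is non-thin and $f_i^{-1}(R)$ is thin, the difference $\PP^1_{L'}(L) \smallsetminus f_i^{-1}(R)$ is non-thin: otherwise, $\PP^1_{L'}(L)$ would decompose as $(\PP^1_{L'}(L) \cap f_i^{-1}(R)) \cup (\PP^1_{L'}(L) \smallsetminus f_i^{-1}(R))$, a union of two thin sets and hence thin, contradicting the first fact. In particular this difference is infinite, and its image under $f_i$ is infinite since $f_i$ has finite fibers. Because this image is contained in $S_i \smallsetminus R$, we conclude that $S_i \smallsetminus R$ is infinite for every $i \in I$.

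The main obstacle is the $C_2$ case of the preimage claim, since the relevant fiber product may have irreducible components that map with degree one to $\PP^1_{L'}$ --- so-called section components, arising precisely when $f_i$ rationally factors through the $C_2$-defining cover. Such components must be analyzed separately to confirm that they contribute only a thin amount to the preimage of the original thin set, after which the rest of the argument is bookkeeping.
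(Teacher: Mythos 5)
There is a genuine gap, and it is fatal to the strategy rather than to a single step: the strengthened statement you aim for --- that $S_i\smallsetminus R$ is infinite for \emph{every} $i\in I$ --- is false, and the point of failure is exactly the ``section components'' you defer to the end. Your argument rests on the claim that $f_i^{-1}(R)$ is thin whenever $R$ is thin. For the $C_1$ part this is fine, but for the $C_2$ part it fails precisely when $f_i$ factors rationally through one of the covers $\pi_j\colon V_j\to\PP^1_{L'}$ exhibiting the thinness of $R$: in that case the fiber product $V_j\times_{\PP^1_{L'}}\PP^1_{L'}$ has a component mapping birationally to the second factor, and that component contributes all but finitely many points of $\PP^1_{L'}(L')$ to the preimage --- not ``only a thin amount.'' Concretely, take $f_1\colon x\mapsto x^2$ and $R=\{b^2 \mid b\in L'\}$. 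Then $R$ is thin (type $C_2$ via the squaring map itself), $f_1^{-1}(R)$ is all of $\PP^1_{L'}(L')$ and hence non-thin, and $S_1=f_1(\PP^1_{L'}(L))$ is contained in $R$, so $S_1\smallsetminus R$ is empty. The existential quantifier in the lemma is therefore essential, and no amount of bookkeeping will establish the universal version.

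A telling symptom is that your argument never uses the hypothesis that $S$ itself is non-thin, only that $\PP^1_{L'}(L)$ is; but without that hypothesis the conclusion is false (take $I=\{1\}$ in the example above). The paper's proof is organized around exactly the dichotomy your example forces: it replaces $R$ by a set $R'$ (images of $L'$-points of smooth projective models of the $C_2$-covers) that differs from $R$ by only finitely many points, and shows that for each $i$ either $S_i\smallsetminus R'$ is infinite, or else $f_i$ factors through one of the completed covers $\tw{g_j}$, in which case $S_i\subseteq R'$ outright. The non-thinness of the \emph{union} $S$ is then what rules out the second alternative holding for all $i$ simultaneously, since otherwise $S\subseteq R'$ would be thin. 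To repair your approach you would have to isolate the case where $f_i$ factors through a $C_2$-cover of $R$ and then invoke the non-thinness of $S$ to show this cannot happen for every $i$ --- which is precisely the paper's argument.
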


\begin{proof}
By \cite[p.\ 245 Claim B(f)]{fieldarith} any set of type $C_1$ is contained in a set of type $C_2$, so we have that $$R\subseteq \bigcup_{j=1}^n g_j(Z_j(L'))$$
where each $Z_j$ is a geometrically irreducible curve over $L'$ and each $g_j:Z_j\rightarrow\PP^1_{L'}$ has degree greater than one. Let $Y_j$ be the smooth projective model of $Z_j$. The map $g_j$, considered as a map on the smooth locus of $Z_j$, can be extended to a map $\tw{g_j}:Y_j\rightarrow\PP^1_{L'}$. Let $R'$ be the union $\bigcup \tw{g_j}(Y_j(L'))$.

Assume that $S_i\smallsetminus R'$ is finite. We claim that $S_i$ is then contained in $R'$. To this end, let $U$ be the open subscheme of $\PP^1_{L'}$ obtained by removing $f_i^{-1}(S_i\smallsetminus R')$. Let $Y$ be the disjoint union of the curves $Y_j$ and $g:Y\rightarrow\PP^1_{L'}$ the map induced by the $\tw{g_j}$'s. Since $f_i(U\cap\p_{L'}(L))$ is contained in $g(Y(L'))$, and since $U\cap\p_{L'}(L)$ is a non-thin set, a similar argument to the one in the proof of \cref{nonthinthm} shows that there exists a rational map $h$ such that the following commutes:

$$\xymatrix{
& U\ar[d]^{f_i}\ar@{-->}[ld]_h\\
Y\ar[r] & \PP^1_{L'}
}$$
Since $Y$ is proper, $h$ can be extended to a map defined on all of $\PP^1_{L'}$ and the claim follows.

Since $S$ is non-thin, there must exist an $i\in I$ such that $S_i$ is not contained in $R'$. By the above, $S_i\smallsetminus R'$ is then infinite. Because $R'$ contains all but finitely elements of R, $S_i\smallsetminus R$ is infinite as well.
\end{proof}

With the above results, we are now ready to prove \cref{nonthincor}.

\begin{proof}[Proof of \cref{nonthincor}]
Let $f:X\rightarrow Y$ be a $C_n$-Galois branched cover of curves over $L$. Let $L'=L(\zeta_n)$
and let $f_{L'}:X_{L'}\rightarrow Y_{L'}$ be the base change of $f$ to $L'$.
By Kummer theory $f_{L'}$ is given by $t^n=g$, where $g\in\kappa(Y_{L'})$. Observe the Cartesian diagram
$$\xymatrix{
X_{L'}\ar[r]\ar[d]_{f_{L'}} & \PP^1_{L'}\ar[d] \\
Y_{L'}\ar[r]_g & \PP^1_{L'}&\!\!\!\!\!\!\!\!\!\!\!\!\!\!\!\!\!\!\!\!\!\!\!\!\!\!\!\!\!\!\!\!,
}$$
where the bottom horizontal map is induced by $g$, the vertical right map is $y^n=x$, and the vertical left map is $f_{L'}$.

Since $L'$ is Hilbertian, the set $D$ of $L'$-points of $\p_{L'}$ whose fiber in the map $g\circ f_{L'}:X_{L'}\rightarrow \PP^1_{L'}$ is disconnected is a thin set. Indeed, let $L^{\oper{sep}}$ be a separable closure of $L'$, let $\bar X\rightarrow \p_{L^{\oper{sep}}}$ be the base change of $X_{L'}\rightarrow \p_{L'}$ to $L^{\oper{sep}}$, let $\bar Z\rightarrow \p_{L^{\oper{sep}}}$ be its Galois closure, and let $Z\rightarrow \p_E$ be a model of this Galois closure over some finite field extension $E$ of $L'$. Then by \cite[Proposition 3.3.1]{serretopics}, the set of $E$-points in $\p_E$ whose fiber in $Z$ is disconnected is thin. This implies, using \cite[Proposition 3.2.1]{serretopics}, that the subset of points of $\p_{L'}(L')$ whose fiber in $Z$ is disconnected is thin. In particular, the subset of points in $\p_{L'}(L')$ whose fiber in $X_{L'}$ is disconnected is thin.

Let $A\subseteq \PP^1_{K(\zeta_n)}(K(\zeta_n))$ be the set of points of arithmetic descent for the cover $y^n=x$. By \cref{nonthinthm}, $A$ is non-thin. By \cite[Proposition 3.2.1]{serretopics} $A$, considered as a subset of $\PP^1_{L'}(L')$, remains non-thin. Moreover, $A\smallsetminus D$, with $D$ defined as in the previous paragraph, is non-thin. For any $a\in A\smallsetminus D$, let $b$ be the unique preimage of $a$ under $g:Y_{L'}\rightarrow \p_{L'}$. By our choice of $a$, the $C_n$-Galois field extension $L'(\sqrt[n]{a})/L'$ arithmetically descends to $K$. The specialization of $f_{L'}$ at $b$ is then $(L'(\sqrt[n]{a})\otimes_{L'} \kappa(b))/\kappa(b)$, where $\kappa(b)$ is the residue field of $b$. This specialization is a field extension since the fiber of $g\circ f_{L'}$ over $b$ is connected. Therefore, $b$ is a point of arithmetic descent to $K$ for the cover $f_{L'}$.

Note that if $L'$ is equal to $L$ (i.e., if $L$ already contains a primitive $n^{\oper{th}}$ root of unity), then this concludes the proof. We may therefore assume that $L'$ is a non-trivial extension of $L$. Let $\pi:Y_{L'}\rightarrow Y_L$ be the base change map. Let 
$$B=\{a\in A|\zeta_n\notin\kappa(\pi(z)) \mbox{ for every }z \mbox{ such that } g(z)=a \}.$$ 
It suffices to show that $A\smallsetminus (B\cup D)$ is non-empty. Indeed, for any $a$ in $A\smallsetminus (B\cup D)$, the closed point $\pi(g^{-1}(a))$ is then a point of arithmetic descent.

Define an equivalence relation on $A$ by $a_1\sim a_2$ if and only if $a_1/a_2$ is an $n^{\oper{th}}$ power in $L$. We proceed to show that only finitely many elements in each equivalence class of $A$ are contained in $B$. Indeed, let $a$ be an element of $A$, and let $c$ be an element of $L$ such that $c^na$ is in $B$. Then by the definition of $B$ there is a $z\in Y_{L'}$ such that $g(z)=c^na$ and the residue field $\kappa(\pi(z))$ of $\pi(z)$ does not contain $L'$. After fixing an $L$-embedding of $\kappa(\pi(z))$ into $L^{\oper{sep}}$, this implies that $\kappa(\pi(z))\cap L'$ is a proper subextension $F$ of $L'/L$. Note that this subextension is independent of the embedding, because every subextension of $L'/L$ is Galois over $L$. It suffices to show that only finitely many such $z$ can produce the same $F$.

To this end, we first show that $g$ cannot be of the form $g=c\cdot h$ where $c$ is in $L'$ and $h$ is in $\kappa(Y_F)$. Indeed, let $y^n=h$ define a cover $X'\rightarrow Y_F$ of curves over $F$. Since this map is a model over $F$ of the cover $X_{L^{\oper{sep}}}\rightarrow Y_{L^{\oper{sep}}}$ given by $y^n=g$, by \cite[Lemma 4.16]{hilaf2} $X'\rightarrow Y_F$ is a ``twist'' (in the sense defined in \cite{hilaf2}) of $X_F\rightarrow Y_F$. Furthermore, since $C_n$ is abelian $X'\rightarrow Y_F$ must be Galois by \cite[Lemma 4.18]{hilaf2}. However, $y^n=h$ cannot be Galois over $F$ since $F$ does not contain a primitive $n^{\oper{th}}$ root of unity. (Note that since we have assumed that $X$ is geometrically irreducible, it follows that $g$ is not an $n^{\oper{th}}$ power in $\kappa(Y_{L^{\oper{sep}}})$, and so $h$ is not an $n^{\oper{th}}$ power in $\kappa(Y_F)$.)

Let $k$ be the degree of $L'/F$, and write $g=\sum_{i=0}^{k-1} g_i\zeta_n^i$ and $a=\sum_{i=0}^{k-1}a_i\zeta_n^i$, where each $g_i$ is in $\kappa(Y_F)$ and each $a_i$ is in $F$. Since $g$ is not an $L'$-multiple of an element in $\kappa(Y_F)$, there are indices $i$ and $j$ such that $g_i/g_j$ is a non-constant function. By assumption $\kappa(\pi(z))\cap L'=F$, and so we have that $g_i(z)=c^na_i$ and $g_j(z)=c^na_j$. Since only finitely many points map via $g_i/g_j$ to $a_i/a_j$, only finitely many $z$'s satisfy that $\kappa(\pi(z))\cap L'$ is equal to $F$, as claimed.

Let $a$ be an element of $A$. Note that the equivalence class of $a$ in $A$ is precisely the image of $\p_{L'}(L)$ for the map $\PP^1_{L'}\rightarrow\PP^1_{L'}$ defined by $x\mapsto a x^n$. Therefore, \cref{TNTlem} (with $A$ in the role of $S$, the equivalence classes of $A$ in the role of the $S_i$'s, and $D$ in the role of $R$) implies that there is an equivalence class of $A$ which has infinitely many points not in $D$. Since each such equivalence class of $A$ has only finitely many points in $B$, the set $A\smallsetminus (B\cup D)$ is non-empty, as we wanted to prove.
\end{proof}


\section{A cover with no rational points of arithmetic descent}\label{E:sec3}
The objective of this section is to provide an example of a cover for which the answer to Question \ref{E:quest}.\ref{q1} is negative. To be precise, in this section we prove the following.
\begin{thm}\label{nopointsthm}
Let $z$ be a parameter for $\PP^1_{\QQ(\zeta_3)}$. Then the $C_3$-Galois cover of $\PP^1_{\QQ(\zeta_3)}$ defined by $t^3=3(z^3+2)$ has no $\QQ(\zeta_3)$-rational points that arithmetically descend to $\QQ$.
\end{thm}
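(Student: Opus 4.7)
The plan is to reformulate arithmetic descent as a norm condition on $\QQ(\zeta_3)^*$ and then to rule this condition out by a $3$-adic analysis. Write $L=\QQ(\zeta_3)$ and let $\sigma$ generate $\Gal(L/\QQ)$. The key first step will be to establish the descent criterion: for $\alpha\in L^*$ with $\alpha\notin(L^*)^3$, the $C_3$-Kummer extension $L(\sqrt[3]{\alpha})/L$ arithmetically descends to $\QQ$ if and only if $N_{L/\QQ}(\alpha)\in(\QQ^*)^3$. To see this, note that $[L:\QQ]=2$ is coprime to $3$, so any $C_3$-extension $F/\QQ$ is linearly disjoint from $L$, and descent is equivalent to $M:=L(\sqrt[3]{\alpha})$ being $C_6$-Galois (as opposed to $S_3$) over $\QQ$. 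A short commutator computation with $\tau\colon\sqrt[3]{\alpha}\mapsto\zeta_3\sqrt[3]{\alpha}$ and a lift $\tilde{\sigma}$ of $\sigma$ shows that $M/\QQ$ is abelian precisely when $\sigma\alpha\equiv\alpha^{-1}\pmod{(L^*)^3}$, equivalently when $N(\alpha)=\alpha\sigma\alpha$ is a cube in $L^*$. Since $N(\alpha)\in\QQ^*$ and a rational number is a cube in $L$ iff it is a cube in $\QQ$ (an elementary check using $L=\QQ\oplus\QQ\zeta_3$), the criterion follows. I expect this step to be the main conceptual obstacle.

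With the criterion in hand, set $\alpha=3(a^3+2)$ for $a\in L$; one then needs to show that $N(\alpha)=9\,N_{L/\QQ}(a^3+2)$ is never a cube in $\QQ^*$. Let $\pi=1-\zeta_3$, a uniformizer at the unique prime of $\mathcal{O}_L$ above $3$, with $(3)=(\pi)^2$. Using the identity $v_3(N_{L/\QQ}(x))=v_\pi(x)$ for $x\in L^*$ (a direct consequence of $N(\pi)=3$), the task reduces to showing $v_\pi(a^3+2)\not\equiv 1\pmod 3$ for every $a\in L$.

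This I will prove by cases on $v_\pi(a)$. If $v_\pi(a)<0$, then $v_\pi(a^3+2)=3v_\pi(a)\equiv 0\pmod 3$. If $v_\pi(a)\geq 0$, reduce modulo $\pi$: since $\mathcal{O}_L/\pi\cong\FF_3$ and $x^3=x$ in $\FF_3$, we have $a^3+2\equiv a+2\equiv a-1\pmod\pi$, so $v_\pi(a^3+2)=0$ unless $a\equiv 1\pmod\pi$. In the remaining case $a\equiv 1\pmod\pi$, factor $a^3-1=(a-1)(a^2+a+1)$; then $v_\pi(a-1)\geq 1$ and (by writing $a=1+\pi b$ and expanding, so $a^2+a+1=3+3\pi b+\pi^2 b^2$) $v_\pi(a^2+a+1)\geq 2$, hence $v_\pi(a^3-1)\geq 3>2=v_\pi(3)$, and therefore $v_\pi(a^3+2)=v_\pi(3+(a^3-1))=2$. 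In every case $v_\pi(a^3+2)\in\{0,2\}\cup 3\ZZ_{<0}$, never $\equiv 1\pmod 3$. The remaining $L$-rational point $z=\infty$ is handled by the change of variables $w=1/z$, $s=t/z$, which transforms the cover into $s^3=3(1+2w^3)$ with specialization $L(\sqrt[3]{3})/L$ at $w=0$, and $N(3)=9$ is manifestly not a cube. Therefore no $L$-rational point arithmetically descends, proving the theorem.
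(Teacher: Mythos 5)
Your proof is correct, and it takes a genuinely different route from the paper in its decisive step. The paper first proves (\cref{E:saltforthree}) that the points of arithmetic descent for $t^3=z$ are exactly the non-cubes of the form $(x+\zeta_3 y)^2(x+\zeta_3^2 y)$ with $x,y\in\QQ$; your norm criterion is equivalent to this, since $N_{L/\QQ}(\beta^2\sigma(\beta))=N_{L/\QQ}(\beta)^3$ and, conversely, if $N_{L/\QQ}(\alpha)=c^3$ with $c\in\QQ^*$ then $\alpha=\beta^2\sigma(\beta)$ for $\beta=\alpha/c$. Your derivation of the criterion via the dichotomy for $L(\sqrt[3]{\alpha})/\QQ$ is a clean repackaging of the paper's explicit computation with lifts of $\tau$; the one sentence you should add is that if $L(\sqrt[3]{\alpha})/\QQ$ is not even normal then descent also fails, because a compositum $FL$ of Galois extensions of $\QQ$ is Galois over $\QQ$ --- your phrase ``$C_6$ as opposed to $S_3$'' glosses over this third possibility, though it causes no harm. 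The real divergence is in ruling out solutions: the paper clears denominators, runs a $\pi$-adic descent (\cref{pidivideslem}) to reduce to the case $\pi\nmid c$, and then relies on two finite verifications modulo $81$ carried out in SAGE (\cref{sage1lem} and \cref{sage2lem}). You instead observe that the norm formulation collapses the entire Diophantine problem to the single congruence $v_3\bigl(N_{L/\QQ}(3(a^3+2))\bigr)=2+v_\pi(a^3+2)\equiv 0\pmod 3$, and your three-case computation showing $v_\pi(a^3+2)\in\{0,2\}\cup 3\ZZ_{<0}$ is complete and correct (the key case $a\equiv 1\pmod\pi$, where $v_\pi(a-1)\geq 1$ and $v_\pi(a^2+a+1)\geq 2$ force $v_\pi(a^3+2)=v_\pi(3)=2$, checks out). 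This buys a fully human-verifiable, purely local-at-$3$ argument with no computer computation, and it also makes automatic that the fiber is connected at every $\QQ(\zeta_3)$-rational point, since a cube would likewise have norm a cube. The point at infinity is handled essentially identically in both arguments.
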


In order to prove this theorem, we first give a complete characterization of the points of arithmetic descent of a particularly simple $C_3$-Galois cover.

\begin{prop}\label{E:saltforthree}
Let $T$ be the set of $\mathbb{Q}(\zeta_3)$-rational points which arithmetically descend to $\QQ$ for the $C_3$-Galois cover $\p_{\mathbb{Q}(\zeta_3)}\rightarrow \p_{\mathbb{Q}(\zeta_3)}$ given by $t^3=z$. Let $C$ be the set of cubes in $\QQ(\zeta_3)$. Then $T=\{(x+\zeta_3y)^2 (x+\zeta_3^2y)|x,y\in \mathbb{Q}\}\smallsetminus C$.
\end{prop}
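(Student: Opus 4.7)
The plan is to reinterpret arithmetic descent for the cover $t^{3}=z$ as a norm condition via Kummer theory, and then verify the resulting set-theoretic identity by direct computation. Let $\sigma$ denote the nontrivial element of $\Gal(\mathbb{Q}(\zeta_3)/\mathbb{Q})$ and, for $a\in\mathbb{Q}(\zeta_3)^{\times}$, write $N(a)=a\sigma(a)\in\mathbb{Q}^{\times}$. For a $\mathbb{Q}(\zeta_3)$-rational point $z=a$ the fiber is connected exactly when $a\notin C$, in which case the specialization is the $C_3$-extension $E=\mathbb{Q}(\zeta_3)(\sqrt[3]{a})/\mathbb{Q}(\zeta_3)$.

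The central claim I will establish is that $a\in T$ if and only if $a\notin C$ and $N(a)\in(\mathbb{Q}^{\times})^{3}$. Because $[\mathbb{Q}(\zeta_3):\mathbb{Q}]=2$ and $[E:\mathbb{Q}(\zeta_3)]=3$ are coprime, a $C_3$-Galois extension $F/\mathbb{Q}$ with $F\otimes_{\mathbb{Q}}\mathbb{Q}(\zeta_3)\cong E$ exists iff $E/\mathbb{Q}$ is Galois with abelian (hence cyclic) group $C_6$, and $F$ is then the fixed field of the unique order-$2$ subgroup. Using that $\sigma$ acts on $\mu_3$ by inversion, the $\sigma$-equivariance of the Kummer pairing $\Gal(E/\mathbb{Q}(\zeta_3))\times\langle a\rangle/\text{cubes}\to\mu_3$ shows that $E/\mathbb{Q}$ is Galois iff $\sigma(a)\equiv a^{\pm 1}\pmod{(\mathbb{Q}(\zeta_3)^{\times})^{3}}$, with the ``$-1$'' case forcing $\sigma$ to act trivially on the $C_3$-factor ($G=C_6$) and the ``$+1$'' case forcing $\sigma$ to act by inversion ($G=S_3$). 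Hence, for $a\notin C$, we have $a\in T$ iff $a\sigma(a)=N(a)$ is a cube in $\mathbb{Q}(\zeta_3)^{\times}$. This is equivalent to $N(a)\in(\mathbb{Q}^{\times})^{3}$: any $c\in\mathbb{Q}(\zeta_3)$ with $c^{3}\in\mathbb{Q}^{\times}$ satisfies $\sigma(c)/c\in\mu_3$, which forces $c\in\mathbb{Q}\cdot\mu_3$ and hence $c^{3}\in(\mathbb{Q}^{\times})^{3}$.

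With this characterization, the proposition reduces to an explicit computation. Writing $\beta=x+\zeta_3 y$, one has $(x+\zeta_3 y)^{2}(x+\zeta_3^{2}y)=\beta\cdot(x+\zeta_3 y)(x+\zeta_3^{2}y)=\beta\cdot N(\beta)$, where $N(\beta)=x^{2}-xy+y^{2}\in\mathbb{Q}$. If $a=(x+\zeta_3 y)^{2}(x+\zeta_3^{2}y)$, then $N(a)=N(\beta)^{3}\in(\mathbb{Q}^{\times})^{3}$, giving the inclusion $\supseteq$ once $C$ is excluded. Conversely, if $a\notin C$ and $N(a)=s^{3}$ with $s\in\mathbb{Q}^{\times}$, setting $\beta:=a/s\in\mathbb{Q}(\zeta_3)$ gives $N(\beta)=N(a)/s^{2}=s$ and $\beta\cdot N(\beta)=a$; writing $\beta=x+\zeta_3 y$ with $x,y\in\mathbb{Q}$ exhibits $a$ in the required form. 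The main obstacle I expect is the Kummer-pairing compatibility computation distinguishing $G=C_6$ from $G=S_3$; the rest is short computation and standard facts about linearly disjoint subextensions.
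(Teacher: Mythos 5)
Your proof is correct, but it is organized quite differently from the paper's. The paper argues entirely by explicit Galois-theoretic computation: for the inclusion $\supseteq$ it extends the generator $\tau$ of $\Gal(\mathbb{Q}(\zeta_3)/\mathbb{Q})$ to $\mathbb{Q}(\zeta_3,\sqrt[3]{a})$ by the explicit formula $\sqrt[3]{a}\mapsto \sqrt[3]{a}^2/(x+\zeta_3 y)$ and checks by hand that it commutes with the order-$3$ automorphism; for $\subseteq$ it uses the commutation relation to show that $b=\tau(\sqrt[3]{a})/\sqrt[3]{a}^2$ lies in $\mathbb{Q}(\zeta_3)$ and then solves directly for $a=\tau(1/b)(1/b)^2$. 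You instead interpose a structural intermediate step: arithmetic descent is equivalent (by coprimality of $2$ and $3$) to $\mathbb{Q}(\zeta_3,\sqrt[3]{a})/\mathbb{Q}$ being Galois with group $C_6$ rather than $S_3$, which by the equivariance of the Kummer pairing under the action of $\Gal(\mathbb{Q}(\zeta_3)/\mathbb{Q})$ on $\mu_3$ is the condition $N(a)\in(\mathbb{Q}(\zeta_3)^\times)^3$, hence $N(a)\in(\mathbb{Q}^\times)^3$; you then identify this norm condition with the explicit parametrization $\{\beta^2\ol{\beta}(\beta)\}$ by the clean substitution $\beta=a/s$. (I verified the sign bookkeeping: $\ol{a}\equiv a^{-1}$ modulo cubes does give $C_6$ and $\ol{a}\equiv a$ gives $S_3$, and for $a\notin C$ the two cases are mutually exclusive, so your dichotomy is sound.) Your route buys conceptual clarity and generality --- it is essentially the Albert--Miki--Saltman viewpoint that the paper's subsequent remark alludes to, and it would adapt to $q$-th roots for other primes --- at the cost of invoking the (standard but here unproved) compatibility of the Kummer pairing with the base Galois action. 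The paper's computation is more self-contained and produces the descended generator explicitly, which is what its converse direction needs. One small polish point: your displayed identity should read $\beta\cdot N(\beta)=\beta^2\ol{\beta}(\beta)$ with $\ol{\beta}$ denoting conjugation, i.e.\ $(x+\zeta_3y)^2(x+\zeta_3^2y)$; as written the chain of equalities is fine but it is worth stating that $x+\zeta_3^2y$ is the conjugate of $x+\zeta_3 y$, since that is the only reason the target set coincides with $\{\beta\cdot N(\beta)\}$.
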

\begin{proof}
Let $a=(x+\zeta_3 y)^2 (x+\zeta_3^2 y)$ where $x,y\in\mathbb{Q}$, and let $\sqrt[3]{a}$ be a third root of $a$ in the specialization at $z=a$. Note that the specialization at $z=a$ is disconnected precisely when $a\in C$. Let $\sigma \in \oper{Gal}(\mathbb{Q}(\zeta_3,\sqrt[3]{a})/\mathbb{Q}(\zeta_3))$ be such that $\sigma(\sqrt[3]{a})=\zeta_3\sqrt[3]{a}$, and let $\tau$ be the generator of $\oper{Gal}(\mathbb{Q}(\zeta_3)/\mathbb{Q})$. Note that $\frac{\sqrt[3]{a}^2}{x+\zeta_3y}$ is a third root of $\tau(a)$. Indeed, $$\left(\frac{\sqrt[3]{a}^2}{x+\zeta_3y}\right)^3=\frac{a^2}{(x+\zeta_3y)^3}=(x+\zeta_3^2y)^2(x+\zeta_3y)=\tau(a).$$
Therefore, $\tau$ extends to an element in $\oper{Gal}(\mathbb{Q}(\zeta_3, \sqrt[3]{a})/\mathbb{Q})$ by taking $\sqrt[3]{a}$ to $\frac{\sqrt[3]{a}^2}{x+\zeta_3y}$. In order to show that $z=a$ is a point of arithmetic descent, we wish to show that $\sigma$ and $\tau$ commute. It suffices to check this on $\sqrt[3]{a}$:
$$\sigma(\tau(\sqrt[3]{a}))=\sigma\left(\frac{\sqrt[3]{a}^2}{x+\zeta_3y}\right)=\zeta_3^2\frac{\sqrt[3]{a}^2}{x+\zeta_3y}=\tau(\zeta_3)\frac{\sqrt[3]{a}^2}{x+\zeta_3y}=\tau(\zeta_3\sqrt[3]{a})=\tau(\sigma(\sqrt[3]{a})).$$

We now show the reverse direction. Let $a$ be in $T$, and let $\sqrt[3]{a}$ be a third root of $a$ in the specialization at $z=a$. As before, let $\sigma \in \oper{Gal}(\mathbb{Q}(\zeta_3,\sqrt[3]{a})/\mathbb{Q}(\zeta_3))$ be such that $\sigma(\sqrt[3]{a})=\zeta_3\sqrt[3]{a}$, and let $\tau$ be the generator of $\oper{Gal}(\mathbb{Q}(\zeta_3)/\mathbb{Q})$. Since $z=a$ is a point of arithmetic descent, the automorphism $\tau$ extends to an element in $\oper{Gal}(\mathbb{Q}(\zeta_3, \sqrt[3]{a})/\mathbb{Q})$ that commutes with $\sigma$. Therefore, $$\sigma(\tau(\sqrt[3]{a}))=\tau(\sigma(\sqrt[3]{a}))=\tau(\zeta_3\sqrt[3]{a})=\zeta_3^2\tau(\sqrt[3]{a}).$$ Since $\sigma(\sqrt[3]{a}^2)=\zeta_3^2\sqrt[3]{a}$, it follows that $\sigma(\frac{\tau(\sqrt[3]{a})}{\sqrt[3]{a}^2})=\frac{\tau(\sqrt[3]{a})}{\sqrt[3]{a}^2}$. Therefore $\frac{\tau(\sqrt[3]{a})}{\sqrt[3]{a}^2}$ is equal to an element $b$ in $\mathbb{Q}(\zeta_3)$. Since $\tau$ is an involution, it follows that $$\sqrt[3]{a}=\tau^2(\sqrt[3]{a})=\tau(b\sqrt[3]{a})
=\tau(b)\tau(\sqrt[3]{a}^2)=\tau(b)b^2\sqrt[3]{a}^4=\tau(b)b^2a\sqrt[3]{a}.$$
Therefore $a=\frac{1}{\tau(b)b^2}=\tau(\frac1b)(\frac1b )^2$. Taking $x,y\in\mathbb{Q}$ so that $\frac1b = x+\zeta_3y$, we see that $a=(x+\zeta_3 y)^2(x+\zeta_3^2 y)$.
\end{proof}
\begin{rmk}\rm
That every point in $T$ is a point of arithmetic descent is a rephrasing of an observation in \cite{albert}, which, together with its generalization in \cite{miki}, applies to every power of an odd prime. Those results, together with an analogous result for powers of $2$ (first appearing in \cite{sueyoshi}), are summarized neatly in Section 2 of \cite{saltman} (and in particular Theorems 2.3 and 2.4).

The reverse direction does not appear in these papers, but is most closely related to Theorem 2.3.b in \cite{saltman}. Indeed, following the proof one sees that Saltman proves a slightly stronger statement than Theorem 2.3.b. Namely, in his notation, he proves that $a^k$ is in the image of $M_{\tau}$. In the situation of Theorem 2.3, if $q=3$ then it's possible to choose $m=2$, in which case $k=1$, and the proposition above follows.
\end{rmk}

Throughout this section, we let $g(x,y)=(x+\zeta_3y)^2(x+\zeta_3^2y)$. By \cref{E:saltforthree}, if $g(x,y)=3(z^3+2)$ has no solution where $x,y\in\QQ$ and $z\in \QQ(\zeta_3)$, then the cover $t^3=3(z^3+2)$ has no $\QQ(\zeta_3)$-rational point of arithmetic descent, except possibly at infinity. We shall prove that this is the case by showing that this equation has no solution in the completion at the unique prime $(\pi)$ above $(3)$ in $\QQ(\zeta_3)$, where $\pi=1+2\zeta_3$. 

We first collect the requisite technical lemmas.

\begin{lem}\label{pidivideslem} \label{removepilem}
Let $x$ and $y$ be elements in $\mathbb{Z}$ such that $\pi$ divides $g(x,y)=(x+\zeta_3y)^2(x+\zeta_3^2y)$. Then $\pi$ divides both $(x+\zeta_3y)$ and $(x+\zeta_3^2y)$. Moreover, there exist $x',y'\in\ZZ$ such that $g(x',y')=g(x,y)/\pi^3$.
\end{lem}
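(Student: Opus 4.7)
The plan is to leverage the explicit structure of $\pi = 1+2\zeta_3$. A quick calculation gives $\pi = \sqrt{-3}$, so $\pi^2 = -3$, $(\pi)$ is the unique prime of $\ZZ[\zeta_3]$ above $3$, complex conjugation $\tau$ sends $\pi \mapsto -\pi$, and the residue field $\ZZ[\zeta_3]/(\pi) \cong \FF_3$ satisfies $\zeta_3 \equiv 1 \pmod \pi$. For $x, y \in \ZZ$ this translates into the chain of equivalences
\[
\pi \mid (x+\zeta_3 y) \iff 3 \mid (x+y) \iff \pi \mid (x+\zeta_3^2 y).
\]
The first assertion of the lemma is then immediate: primality of $(\pi)$ forces $\pi$ to divide one of the two linear factors of $g(x,y)$, and the equivalence above forces it to divide the other as well.

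For the ``moreover'' part, the plan is to write down $x', y'$ explicitly. Setting $X + \zeta_3 Y := (x+\zeta_3 y)/\pi$ (which lies in $\ZZ[\zeta_3]$ by the first part) and rationalizing by multiplying numerator and denominator by $\bar{\pi}$ (using $\pi\bar{\pi}=3$), I obtain the candidate integers
\[
X = \frac{-x+2y}{3}, \qquad Y = \frac{-2x+y}{3},
\]
which are integers precisely because $3 \mid (x+y)$. Writing $A := X + \zeta_3 Y$, one then has $g(x,y)/\pi^3 = A^2 \cdot \bigl((x+\zeta_3^2 y)/\pi\bigr)$ while $g(X, Y) = A^2 \tau(A)$, so the remaining question is how $(x+\zeta_3^2 y)/\pi$ relates to $\tau(A)$.

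The step requiring care is the resulting sign. Applying $\tau$ to the identity $A = (x+\zeta_3 y)/\pi$ and using $\tau(\pi) = -\pi$ gives $\tau(A) = -(x+\zeta_3^2 y)/\pi$, so $g(x,y)/\pi^3 = -A^2\tau(A) = -g(X,Y)$. This extra minus sign is absorbed by the homogeneity of $g$ of degree $3$, which gives $-g(X,Y) = g(-X,-Y)$. Thus taking $x' := -X = (x-2y)/3$ and $y' := -Y = (2x-y)/3$ yields the desired equality $g(x', y') = g(x,y)/\pi^3$.
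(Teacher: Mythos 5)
Your proposal is correct and takes essentially the same route as the paper: the first claim comes from the primality of $(\pi)$ plus the fact that the two linear factors are Galois conjugates (and $\tau(\pi)=-\pi$), and for the second claim both arguments end up choosing $x'+\zeta_3 y'=-(x+\zeta_3 y)/\pi$, with the sign forced by $\tau(\pi)=-\pi$ (the paper tracks it through the norm identity $g(a,b)=(a+\zeta_3 b)N(a+\zeta_3 b)$, you through homogeneity of degree $3$). The only difference is cosmetic: you additionally write out $x'=(x-2y)/3$ and $y'=(2x-y)/3$ explicitly.
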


\begin{proof}
The first part of the lemma follows easily from the fact that $\z[\zeta_3]$ is a PID, and that $x+\zeta_3^2y$ is the Galois conjugate of $x+\zeta_3y$.

Let $N:\QQ(\zeta_3)\rightarrow\QQ$ denote the field norm. For any $a,b\in\ZZ$, we have that $g(a,b)=(a+\zeta_3b)N(a+\zeta_3b)$. Since $\pi$ divides $g(x,y)$, by the first part of this lemma $\pi$ divides $(x+\zeta_3 y)$. Therefore, there exist $x',y'\in\ZZ$ such that $(x'+\zeta_3 y')=-(x+\zeta_3 y)/\pi$. We have that
\begin{align*}
g(x',y')&=(x'+\zeta_3 y')N(x'+\zeta_3 y')=\frac{-(x+\zeta_3 y)}{\pi}\cdot\frac{N(-1)N(x+\zeta_3 y)}{N(\pi)}\\
&=\frac{-(x+\zeta_3 y)N(x+\zeta_3 y)}{-\pi^3}=\frac{g(x,y)}{\pi^3},
\end{align*}
proving the second part of the lemma.
\end{proof}

A key step in the proof of \cref{nopointsthm} is reducing the question of finding solutions to $g(x,y)=3(z^3+2)$ to finding solutions in the finite ring $\ZZ[\zeta_3]/(81)$. After this reduction, we shall make use of the following two lemmas.

\begin{lem}\label{sage1lem}
Consider $g(x,y)$ as a polynomial in $(\ZZ[\zeta_3]/(81))[x,y]$ and fix $c\in\ZZ[\zeta_3]/(81)$. Then the set $\{c^3\cdot g(x,y)|x,y \in \ZZ/(81)\}$ is contained in $\{g(x,y)|x,y \in \ZZ/(81)\}$.
\end{lem}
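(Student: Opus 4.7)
The plan is to reinterpret the problem inside the finite ring $R := \ZZ[\zeta_3]/(81)$. Every element of $R$ has a unique expression $\alpha = x + \zeta_3 y$ with $x, y \in \ZZ/(81)$, and $g(x, y) = \alpha^2 \bar\alpha$, where $\alpha \mapsto \bar\alpha$ denotes the involution of $R$ induced by $\zeta_3 \mapsto \zeta_3^2$. Thus
\[ \{g(x,y) : x, y \in \ZZ/(81)\} = \{\alpha^2 \bar\alpha : \alpha \in R\}, \]
and the lemma is equivalent to the assertion that, for every $c \in R$, multiplication by $c^3$ sends this set into itself. Concretely, I will produce, for each pair $(c, \alpha) \in R \times R$, an explicit $\beta \in R$ with $\beta^2 \bar\beta = c^3 \alpha^2 \bar\alpha$.

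I next use the local structure of $R$. The element $\pi := 1 + 2\zeta_3$ satisfies $\pi^2 = -3$ and $\bar\pi = -\pi$, so $81 = \pi^8$ and $R \cong \ZZ[\zeta_3]/(\pi^8)$ is a finite local ring with maximal ideal $(\pi)$. The naive guess for $\beta$ is $c^2 \bar c^{-1} \alpha$, which formally satisfies $\beta^2 \bar\beta = c^3 \alpha^2 \bar\alpha$ whenever $\bar c$ is a unit. The key idea for handling the remaining cases is to strip off $\pi$ from $c$ by hand.

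Set $k := v_\pi(c)$. If $c = 0$ or $k \geq 3$, then $c^3$ has $\pi$-valuation at least $9 > 8$, so $c^3 = 0$ in $R$ and $\beta = 0$ works. Otherwise $0 \leq k \leq 2$ and I can write $c = \pi^k u$ with $u \in R^\times$. I propose
\[ \beta := (-1)^k \pi^k u^2 \bar u^{-1} \alpha. \]
Because $\bar u$ is a unit, $\beta$ lies in $R$. A direct computation using $\bar\pi = -\pi$ yields $\bar\beta = (-1)^k \cdot (-1)^k \pi^k \bar u^2 u^{-1} \bar\alpha = \pi^k \bar u^2 u^{-1} \bar\alpha$, where the two factors $(-1)^k$ come from $\overline{(-1)^k}$ and from $\bar\pi^k = (-1)^k \pi^k$. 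Hence
\[ \beta^2 \bar\beta = \pi^{2k} u^4 \bar u^{-2} \alpha^2 \cdot \pi^k \bar u^2 u^{-1} \bar\alpha = \pi^{3k} u^3 \alpha^2 \bar\alpha = c^3 g(x,y). \]
Writing $\beta = x' + \zeta_3 y'$ with $x', y' \in \ZZ/(81)$ then provides the desired identity $g(x', y') = c^3 g(x, y)$.

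The only conceptual obstacle is noticing that $\bar c$ fails to be invertible precisely when $\pi \mid c$, and that in those cases one must simulate the formal ratio $c^2/\bar c$ by the explicit product $(-1)^k \pi^k u^2 \bar u^{-1}$. Once this correct $\beta$ is written down, the entire verification is straightforward bookkeeping in the local ring $\ZZ[\zeta_3]/(\pi^8)$, driven entirely by the single identity $\bar\pi = -\pi$.
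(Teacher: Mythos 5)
Your proof is correct, and it takes a genuinely different route from the paper: the paper's proof of this lemma is purely computational (a brute-force verification over all of $(\ZZ/81)^2$ implemented in SAGE), whereas you give a conceptual argument. Your key observations --- that $\{g(x,y) : x,y\in\ZZ/(81)\}$ is exactly the image of $\alpha\mapsto\alpha^2\overline{\alpha}$ on all of $R=\ZZ[\zeta_3]/(\pi^8)$, that the ``formal'' witness $\beta=c^2\overline{c}^{\,-1}\alpha$ works when $c$ is a unit, and that the non-unit case is absorbed by writing $c=\pi^k u$ and using $\overline{\pi}=-\pi$ (with the sign $(-1)^k$ correctly inserted so that the stray $(-1)^k$ in $\overline{\beta}$ cancels) --- all check out, as does the disposal of $v_\pi(c)\ge 3$ via $c^3=0$ in $R$. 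What your approach buys is significant: it eliminates the need for machine verification of this lemma entirely, it works verbatim with $81=\pi^8$ replaced by any power $\pi^N$ (so the choice of modulus $81$ in the paper is then dictated only by the companion Lemma \ref{sage2lem}, which genuinely requires a computation), and it explains \emph{why} the statement is true rather than merely certifying it. What the paper's approach buys is only brevity of exposition and uniformity with the treatment of Lemma \ref{sage2lem}.
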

\begin{proof}
This is a finite checking problem. We implemented an algorithm in SAGE \cite{sage} which verifies this result. The interested reader may consult \cite{sagelems} for the code.
\end{proof}

\begin{lem}\label{sage2lem}
The equation $g(x,y)=3(z^3+2)$ has no solution with $x,y\in \ZZ/(81)$ and $z\in\ZZ[\zeta_3]/(81)$.
\end{lem}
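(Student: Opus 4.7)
The assertion concerns the finite ring $R = \ZZ[\zeta_3]/(81)$, which has $81^2 = 6561$ elements, so my plan is to verify it by direct enumeration in a computer algebra system, in the same spirit as the proof of \cref{sage1lem}.

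The computation proceeds in the concrete model where every element of $R$ is written uniquely as $a + b\zeta_3$ with $a,b \in \ZZ/(81)$ and multiplication is governed by the relation $\zeta_3^2 = -1 - \zeta_3$. In this model, I would first precompute the set $S_1 = \{g(x,y) : x,y \in \ZZ/(81)\}$ by iterating over the $6561$ pairs $(x,y)$ and recording the value $g(x,y) = (x+\zeta_3 y)^2(x+\zeta_3^2 y) \in R$. Then, for each of the $6561$ values $z \in R$, I would compute $3(z^3 + 2)$ and test that it does not lie in $S_1$; the claim is equivalent to every such test succeeding.

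To reduce the bookkeeping, \cref{sage1lem} can be invoked: since $c^3 \cdot g(x,y) \in S_1$ for all $c \in R$, the set $S_1$ is a union of cube-orbits, so one need only tabulate coset representatives of $S_1$ modulo the subgroup of cubes and then match each $3(z^3+2)$ against these. This shrinks the lookup table considerably but is not logically needed for the verification, only for efficiency.

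There is no conceptual obstacle here, only the pragmatic one of coding the arithmetic of $R$ correctly and enumerating without skipping cases; the two enumerations are each of size $6561$, well within computational reach. The explicit SAGE script carrying out the verification is the one referenced in \cite{sagelems}, alongside the code for \cref{sage1lem}.
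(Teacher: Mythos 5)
Your proposal is correct and matches the paper's proof, which likewise treats this as a finite checking problem verified by a SAGE enumeration over $\ZZ/(81)$ and $\ZZ[\zeta_3]/(81)$. The extra detail you give about the ring model and the optional use of \cref{sage1lem} to shrink the lookup table is a harmless refinement of the same computation.
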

\begin{proof}
This is also a finite checking problem, which we again verify using SAGE \cite{sage}. The interested reader may consult \cite{sagelems} for the code.
\end{proof}

We remark that we reduce modulo 81 since it is the first power of 3 for which \cref{sage2lem} holds. Equipped with the above lemmas, we can now prove the main result of this section.

\begin{proof}[Proof of \cref{nopointsthm}]
By the discussion at the beginning of this section, we need to show that there is no solution to $g(x,y)=3(z^3+2)$ where $x,y\in\QQ$ and $z\in\QQ(\zeta_3)$, and in addition we need to show that the point at infinity does not arithmetically descend to $\QQ$. 

Assume by contradiction that we have such a solution to $g(x,y)=3(z^3+2)$. Let $x=x_0/x_1$, $y=y_0/y_1$, and $z=z_0/z_1$, where $x_i,y_i\in\ZZ$ and $z_i\in\ZZ[\zeta_3]$. Clearing denominators, we have that
$(z_1)^3 g(x_0 y_1,y_0 x_1)=3((z_0 x_1 y_1)^3+2(x_1 y_1 z_1)^3)$. Therefore it suffices to show that there is no solution to the equation

\begin{equation}\label{counterexampleeqn}
a^3g(x,y)=3(z^3+2c^3)
\end{equation}
with $x,y\in\ZZ$, $z\in\ZZ[\zeta_3]$, and non-zero $a,c\in\ZZ[\zeta_3]$.

Case 1: $\pi$ does not divide $c$. Reduce \cref{counterexampleeqn} modulo $(81)=(\pi)^8$. We denote the reduction by the use of bars over the elements. Since $\pi$ does not divide $c$, $\bar{c}^3$ is a unit in $\ZZ/(81)$ and so we may divide by it. Letting $z'=\bar{z}/\bar{c}$, the resulting equation may be written as $(\bar{a}/\bar{c})^3g(\bar{x},\bar{y})=3((z')^3+2)$. Since $(\bar{a}/\bar{c})^3$ is a cube, there are $x',y'\in\ZZ/(81)$ such that $g(x',y')=(\bar{a}/\bar{c})^3g(\bar{x},\bar{y})$ by \cref{sage1lem}. Therefore $g(x',y')=3((z')^3+2)$, in contradiction to \cref{sage2lem}.

Case 2: $\pi$ divides $c$. From this solution we shall produce a solution where $\pi$ does not divide $c$, which we have shown is impossible in Case 1 of the proof. Since $\pi$ divides the right hand side of \cref{counterexampleeqn} (because $\pi$ divides 3), it must divide either $a$ or $g(x,y)$. Due to \cref{pidivideslem}, it follows that $\pi^3$ must divide the left hand side of \cref{counterexampleeqn}. Since $\pi^3$ does not divide $3$, $\pi$ must divide $z^3+2c^3$. Since $\pi$ divides $z^3+2c^3$ and $c$, it must also divide $z^3$, and therefore $z$.

As mentioned, $\pi$ divides either $a$ or $g(x,y)$. Assume first that $\pi$ divides $a$. In this case, we may divide \cref{counterexampleeqn} by $\pi^3$ and get $(a/\pi)^3g(x,y)=3((z/\pi)^3+2(c/\pi)^3)$. Iterating this process yields a solution where either $\pi$ does not divide $c$ (in which case we are done by Case 1 of the proof), or $\pi$ divides $c$ but not $a$.

Therefore, we are left with the case where $\pi$ divides $g(x,y)$ but not $a$. By \cref{removepilem}, there are then $x',y'\in\ZZ$ such that $g(x',y')=g(x,y)/\pi^3$. Dividing both sides of \cref{counterexampleeqn} by $\pi^3$ yields the equation $a^3g(x',y')=3((z/\pi)^3+(c/\pi)^3)$. By iterating this process, we reduce to the case where $\pi$ does not divide $c$, as desired.

It remains to check that infinity does not arithmetically descend to $\QQ$. Homogenizing $t^3=3(z^3+2)$ with respect to $w$ gives us $t^3=3(z^3+2w^3)$. This equation defines a smooth curve in $\PP^2_{\QQ(\zeta_3)}$, giving a projective model for the cover. Above infinity (when $w=0$), the fiber is connected with residue field extension $\QQ(\zeta_3,\sqrt[3]3)/\QQ(\zeta_3)$. If the equation $g(x,y)=3$ has no solution where $x,y\in\QQ$, this field extension does not arithmetically descend to $\QQ$ by \cref{E:saltforthree}. One checks that $g(x,y)\in\QQ$ precisely when $y=0$. Since $g(x,0)=x^3$, there is no solution to $g(x,y)=3$ because 3 is not a cube in $\QQ$.
\end{proof}

\begin{rmk}
\cref{nonthincor} states that, although this cover has no $\QQ(\zeta_3)$-rational points of arithmetic descent, there are infinitely many \textit{closed} points which arithmetically descend to $\QQ$.
\end{rmk}

\bibliography{arithmetic_descent}{}
\bibliographystyle{amsalpha}
\end{document}